\documentclass[12pt]{amsart}
\usepackage[cp850]{inputenc}
\usepackage{graphics}

\oddsidemargin=5mm
\evensidemargin=5mm
\textwidth=155mm

\newtheorem{theorem}{Theorem}

\newtheorem{corollary}[theorem]{Corollary}

\theoremstyle{definition}

\theoremstyle{remark}


\usepackage{amssymb,amsmath}

\newcommand{\g}[2]{\mbox{$\langle #1 ,#2 \rangle$}}
\newcommand{\s}[1]{\mbox{$\mathbb{S}^{#1}$}}

\newcommand{\R}[1]{\mbox{$\mathbb{R}^{#1}$}}

\renewcommand{\S}{\mbox{$\Sigma$}}
\newcommand{\M}{\mbox{$\mathcal{M}(\kappa,\tau)$}}
\newcommand{\Mi}{\mbox{$M$}}
\newcommand{\B}{\mbox{$B$}}

\newcommand{\nablabar}{\mbox{$\overline{\nabla}$}}

\newcommand{\fm}{\mbox{$\mathcal{C}^\infty(\S)$}}

\newcommand{\imm}{\mbox{$\psi:\S\fle\M$}}
\newcommand{\x}{\mbox{$\psi:\S^2\fle\Mi^3$}}

\renewcommand{\H}{\mbox{$\mathcal{H}$}}
\newcommand{\HE}{\mbox{$\mathcal{H}E$}}
\newcommand{\V}{\mbox{$\mathcal{V}$}}
\newcommand{\VE}{\mbox{$\mathcal{V}E$}}
\newcommand{\A}{\mbox{$\mathcal{A}$}}
\newcommand{\T}{\mbox{$\mathcal{T}$}}

\newcommand{\fle}{\mbox{$\rightarrow$}}
\newcommand{\rf}[1]{\mbox{(\ref{#1})}}

\def\area{\mathop\mathrm{Area(\S)}\nolimits}

\def\Kbar{\mathop{\bar{K}}\nolimits}
\def\KbarS{\mathop{\bar{K}_\Sigma}\nolimits}
\def\RiemM{\mathop\mathrm{\overline{R}}\nolimits}
\def\Ric{\mathop\mathrm{\overline{Ric}}\nolimits}
\def\RicN{\mathop\mathrm{\overline{Ric}(\textit{N},\textit{N})}\nolimits}

\def\xfrak{\mathfrak{X}}
\def\E3{\mbox{$\mathbb{E}^3(\kappa,\tau)$}}

\def\integ{\int_\Sigma}

\begin{document}

\title[First stability eigenvalue characterization of Hopf tori]{First stability eigenvalue characterization of CMC Hopf tori into Riemannian Killing submersions}



\author{Miguel A. Mero\~no}
\address{Departamento de Matem\'{a}ticas, Universidad de Murcia, E-30100 Espinardo, Murcia, Spain}
\email{mamb@um.es}
\thanks{This work was partially supported by MECD (Ministerio de Educaci\'on, Cultura y Deporte) Grant  no FPU12/02252, MINECO (Ministerio de Econom\'{\i}a y Competitividad) and FEDER (Fondo Europeo de Desarrollo Regional) project MTM2012-34037 and Fundaci\'{o}n S\'{e}neca project 04540/GERM/06, Spain. This research is a result of the activity developed within the framework of the Programme in Support of Excellence Groups of the Regi\'{o}n de Murcia, Spain, by Fundaci\'{o}n S\'{e}neca, Regional Agency for Science and Technology (Regional Plan for Science and Technology 2007-2010).}

\author{Irene Ortiz}
\address{Departamento de Matem\'{a}ticas, Universidad de Murcia, E-30100 Espinardo, Murcia, Spain}
\email{irene.ortiz@um.es}

\subjclass[2010]{53C42}




\begin{abstract}
We find out upper bounds for the first eigenvalue of the stability
operator for compact constant mean curvature orientable surfaces immersed in a
Riemannian Killing submersion. As a consequence, the strong stability of such surfaces is studied. We also characterize constant mean curvature Hopf tori as the only ones attaining the bound in certain cases.
\end{abstract}

\maketitle

\section{Introduction}

Let \x\ be an isometric immersion of a compact orientable surface in a  three dimensional oriented Riemannian manifold. Fixed a unit normal vector field $N$ globally
defined on \S, we will denote by $A$ the second fundamental form with respect to $N$ of the immersion and by $H$ its mean curvature.

Every smooth function $f\in\fm$ induces a normal variation
$\psi_t$ of the immersion $\psi$, with variational normal field $fN$ and first variation of the
area functional $\mathcal{A}(t)$ given by
$
\delta_f\mathcal{A}=\mathcal{A}'(0)=-2\integ fH.
$
As a consequence, minimal surfaces ($H=0$) are characterized as the critical points of the area
functional whereas constant mean curvature (CMC) surfaces are the critical points of the area functional
restricted to smooth functions $f$ such that $\integ f=0$, which means that the variation leaves constant the volume enclosed by the surface.

For such critical points, the stability of the corresponding variational problem is given by the
second variation of the area functional,
\[
\delta^2_f\mathcal{A}=\mathcal{A}''(0)=-\integ  fJf,
\]
with $Jf=\Delta f+\left(|A|^2+\RicN\right)f$, where $\Delta$ stands
for the Laplacian operator on \S\ and $\Ric$ denotes the Ricci
curvature of \Mi. The surface \S\ is said to be {\em strongly stable} if
$\delta^2_f\mathcal{A}\geq 0$, for every $f\in\fm$. The operator
$J=\Delta+|A|^2+\RicN$, which is a Schr\"odinger operator, is known as the {\em Jacobi} or {\em stability operator} of
the surface. The spectrum of $J$
\[
\mathrm{Spec}(J)=\{ \lambda_1<\lambda_2<\lambda_3<\cdots \}
\]
consists of an unbounded increasing sequence of eigenvalues $\lambda_k$ with finite multiplicities. Moreover, the first eigenvalue is simple
(multiplicity one) and it satisfies the following min-max characterization
\begin{equation}
\label{minmax} \lambda_1=\min \left\{\frac{-\integ  fJf}{\integ  f^2}: \quad f\in\fm,f\neq0
\right\}.
\end{equation}

In terms of the spectrum, \S\ is strongly stable if and only if $\lambda_1\geq 0$.

Observe that with our criterion, a real number $\lambda$ is an eigenvalue of $J$ if and only if
$Jf+\lambda f=0$ for some smooth function $f\in\fm$, $f\neq0$.

In 1968, Simons \cite{Sim} found out an estimate for the first eigenvalue of $J$ on any compact
minimal hypersurface in the standard sphere. In particular, for minimal surfaces in the 3-sphere he
proved that $\lambda_1=-2$ if the surface is totally geodesic and $\lambda_1\leq -4$ otherwise.
Later on, Wu \cite{Wu} characterized the equality by showing that it holds only for the minimal
Clifford torus. In the last decade, Perdomo \cite{Pe} gave a new proof of this spectral characterization
by getting an interesting formula that relates the first eigenvalue $\lambda_1$, the genus of the
surface, the area and a simple invariant. Al\'ias, Barros and Brasil \cite{ABBr} extended
Wu and Perdomo's results to the case of CMC hypersurfaces in the standard
sphere, characterizing some CMC Clifford tori. Very recently, Chen and Wang \cite{CW} have just given optimal estimates for $\lambda_1$ for linear Weingarten hypersurfaces in the sphere characterizing the equality.

In \cite{AMO} the authors study the same problem in homogeneous Riemannian 3-manifolds, in particular in Berger spheres, finding out upper bounds for $\lambda_1$ for compact CMC surfaces immersed into such manifolds. They also get a characterization of CMC Hopf tori in certain Berger spheres and in the product $\s{2}\times\s{1}$.

Homogeneous Riemannian 3-manifolds are a special kind of manifolds belonging to a more general structure with many more interesting examples: Riemannian Killing submersions. They are Riemannian 3-manifolds which fiber over a Riemannian surface and whose fibers are the trajectories of a unit Killing vector field. The study of immersed surfaces into such manifolds is a subject of increasing interest (see \cite{EO}, \cite{Amelia}, \cite{RST} and references therein). In these manifolds, there also exist Hopf tori, which are obtained as the total lift of closed curves by means of the submersion (if it has compact fibers). These surfaces are flat and have constant mean curvature when the curve has constant curvature. They first appear in \cite{Pink} for the Hopf fibration, where they play a key role, and they have been studied in several works (e.g. \cite{Barros} and \cite{BFLM}).

In this paper, we extend the results in \cite{AMO} to Riemannian Killing submersions giving some estimates for $\lambda_1$ for CMC compact orientable surfaces immersed in such manifolds. As a consequence, the strong stability of such surfaces is studied. We also characterize CMC Hopf tori as the only ones attaining the upper bound in certain cases.

\section{Riemannian Killing submersions}

Let $\Mi^3$ be a three dimensional oriented Riemannian manifold and $\pi:\Mi\fle B$ a
Killing submersion over a surface $\B^2$, i.e. a Riemannian
submersion whose vertical unit vector field $\xi$ is a unit Killing
vector field on \Mi\ (hence the fibers are geodesics). In this situation \Mi\ is called a Riemannian Killing submersion. We remind that a vector field on \Mi\ is vertical if it is always tangent to fibers
and horizontal if it is always orthogonal to fibers. If $\nablabar$
stands for the Levi-Civita connection of \Mi, we have
\begin{equation}
\label{tau} \nablabar_E\xi=\tau(E\wedge\xi),
\end{equation}
for all vector fields $E$ on \Mi, where $\wedge$ is the vector
product in \Mi\ and $\tau:\Mi\fle\R{}$ is a smooth function called
the bundle curvature of $\pi$ (see Proposition 2.6 in \cite{EO} and
Lemma 2 in \cite{SV}). We will denote by $\kappa$ the Gaussian
curvature of \B\ and use for such a Riemannian Killing submersion the notation
$\Mi=\M$. In particular, there are some important cases depending on
the functions $\kappa$ and $\tau$. For instance, if both of
these functions are constant, then \M\ is a homogeneous Riemannian
3-manifold and when $\tau=0$, \M\ is a product $\B^2\times\R{}$ or $\B^2\times\s{1}$, where $\B^2$ is an arbitrary Riemannian surface.

We will denote by \V\ and \H\ the projections of the tangent spaces
of  \M\ onto the subspaces of vertical and horizontal vectors
respectively (so, every arbitrary vector field $E\in\xfrak(\M)$ can
be decomposed as $\HE+\VE$). Throughout this paper, the letters $X$ and
$Y$ stand for horizontal vector fields.

It is worth pointing out that there are two tensors which are
related to a Riemannian submersion $\pi$ (see \cite{O}). They appear
naturally and are defined by
\[
\T_E F=\H\nablabar_{\VE}(\V F)+\V\nablabar_{\VE}(\H F),
\]
and
\[
\A_E F=\V\nablabar_{\HE}(\H F)+\H\nablabar_{\HE}(\V F),
\]
for all vector fields $E,F\in\xfrak(\M)$. They have some properties
that can be seen in \cite{O}.

Let us now compute the sectional curvature $\Kbar(X,E)$ of any
tangent plane to \M, where $\{X,E\}$ is an orthonormal basis
spanning the plane. Without loss of generality we can assume that
$X$ is a horizontal vector field.

If we complete $\{X\}$ to a positively oriented local orthonormal
basis $\{X,Y,\xi\}$, where $Y$ is a horizontal vector field, taking
into account \rf{tau} we easily get
\begin{align}
\label{conexiones}
\nonumber \nablabar_X X &=\alpha Y, \quad & \nablabar_Y X&=-\beta Y-\tau \xi, \quad & \nablabar_\xi X&=-\delta Y,\\
\nonumber \nablabar_X Y&=-\alpha X+\tau\xi,\quad & \nablabar_Y Y&=\beta X, \quad & \nablabar_\xi Y&=\delta X,\\
\nablabar_X \xi&=-\tau Y, \quad & \nablabar_Y \xi&=\tau X, \quad &
\nablabar_\xi\xi&=0,
\end{align}
where $\alpha=\g{\nablabar_X
X}{Y}$, $\beta=\g{\nablabar_Y Y}{X}$ and $\delta=\g{\nablabar_\xi
Y}{X}$.

By decomposing $E=\g{E}{Y}Y+\g{E}{\xi}\xi$,  we have
\begin{eqnarray*}
\Kbar(X,E)&=&\g{\RiemM(X,E)X}{E} \\
&=&\g{E}{Y}^2\Kbar(X,Y)+\g{E}{\xi}^2\Kbar(X,\xi)+2\g{E}{Y}\g{E}{\xi}\g{\RiemM(X,Y)X}{\xi},
\end{eqnarray*}
and we know by Lemma 2.8 in \cite{EO} that $\Kbar(X,Y) = \kappa-3\tau^2$ and $\Kbar(X,\xi)=\tau^2$. Thus we obtain
\begin{equation}
\Kbar(X,E)=\g{E}{Y}^2(\kappa-3\tau^2)+\g{E}{\xi}^2\tau^2+2\g{E}{Y}\g{E}{\xi}\g{\RiemM(X,Y)X}{\xi}.
\end{equation}

From Theorem 2 in \cite{O} the curvature $\g{\RiemM(X,Y)X}{\xi}$ can be computed by using the tensors \T\ and \A\ as follows
\[
\g{\RiemM(X,Y)X}{\xi}=\g{(\nablabar_X\A)_X Y}{\xi}+\g{\A_X
Y}{\T_{\xi}X}\\
-\g{\A_{Y}X}{\T_{\xi}X}-\g{\A_X X}{\T_{\xi}Y}.
\]

Since \A\ has the alternation property for horizontal vector fields
and $\T_{\xi}X=\V\nablabar_{\xi}X=0$ by \rf{conexiones}, the above
expression reduces to
\[
\g{\RiemM(X,Y)X}{\xi}=\g{(\nablabar_X\A)_XY}{\xi}.
\]

On the other hand
\begin{eqnarray*}
(\nablabar_X \A)_X Y&=&\nablabar_X(\A_XY)-\A_{\nablabar_X
X}Y-\A_X(\nablabar_XY) \\
&=& \nablabar_X(\tau\xi) - \alpha\A_YY+\alpha \A_XX-\tau\A_X\xi \\
&=& X(\tau)\xi.
\end{eqnarray*}

Summing up and bearing in mind that $\g{E}{Y}^2+\g{E}{\xi}^2=1$ we find that the sectional curvature is given by the following formula
\begin{equation}
\label{curvsec0}
\Kbar(X,E)=\kappa-3\tau^2-\g{E}{\xi}^2(\kappa-4\tau^2)+2\g{E}{Y}\g{E}{\xi}X(\tau).
\end{equation}

As a consequence of this computation we can derive an expression
of the Ricci tensor of \M\ in the normal direction $N$ to
the plane generated by the basis $\{X,E\}$. We can write
$N=\g{N}{Y}Y+\g{N}{\xi}\xi$ and then we have
\begin{eqnarray*}
\RicN&=&\g{\RiemM(X,N)X}{N}+\g{\RiemM(Y,N)Y}{N}+\g{\RiemM(\xi,N)\xi}{N}\\
&=&\Kbar(X,N)+\g{N}{\xi}^2\g{\RiemM(Y,\xi)Y}{\xi}+\g{N}{Y}^2\g{\RiemM(\xi,Y)\xi}{Y}\\
&=&\Kbar(X,N)+\Kbar(Y,\xi).
\end{eqnarray*}

Again, we know that $\Kbar(Y,\xi)=\tau^2$ and $\Kbar(X,N)$ can be calculated from \rf{curvsec0}. Thus we obtain
\[
\RicN=\kappa-2\tau^2-\g{N}{\xi}^2(\kappa-4\tau^2)+2\g{N}{Y}\g{N}{\xi}X(\tau).
\]

By setting $\nu=\g{N}{\xi}$, we can write the above equation as
\begin{equation}
\label{Ric}
\RicN=\kappa-2\tau^2-\nu^2(\kappa-4\tau^2)+2\nu\sqrt{1-\nu^2}X(\tau)
\end{equation}
because of $\g{N}{Y}^2+\g{N}{\xi}^2=1$ and we can suppose that $\g{N}{Y}\geq0$ without loss of generality.

Finally, equation \rf{curvsec0} can be simplified as follows.  Let us put $\xi=\g{E}{\xi}E+\g{N}{\xi}N$ so that $\g{E}{Y}\g{E}{\xi}=\g{\g{E}{\xi}E}{Y}=-\g{N}{Y}\g{N}{\xi}=-\nu\sqrt{1-\nu^2}$  and \rf{curvsec0} is reduced to
\begin{equation}
\label{cursec}
\Kbar(X,E)=\tau^2+\nu^2(\kappa-4\tau^2)-2\nu\sqrt{1-\nu^2}X(\tau).
\end{equation}

\section{Surfaces into Riemannian Killing submersions}

Let \imm\ be a compact orientable surface with constant mean curvature $H$ immersed into \M\ and choose a first positive eigenfunction $\rho\in\fm$
of the stability operator. Thus $J\rho=-\lambda_1\rho$ or, equivalently,
\begin{equation}
\label{deltarho}
\Delta\rho=-\left(\lambda_1+|A|^2+\RicN\right)\rho.
\end{equation}

Extending Perdomo's ideas \cite[Section 3]{Pe} to our more general case, one can compute
\[
\Delta
\textrm{log}\rho=\rho^{-1}\Delta\rho-\rho^{-2}|\nabla\rho|^2=-\left(\lambda_1+|A|^2+\RicN\right)-\rho^{-2}|\nabla\rho|^2,
\]
and integrate on \S\ to find
\[
\alpha=\integ\rho^{-2}|\nabla\rho|^2=-\lambda_1\area-\integ\left(|A|^2+\RicN\right),
\]
where $\alpha\geq0$ defines a simple invariant that is independent of the choice of $\rho$ because
$\lambda_1$ is simple. In other words
\[
\lambda_1=-\frac{1}{\area}\left(\alpha+\integ\left(|A|^2+\RicN\right)\right).
\]

Now from the Gauss equation, we obtain a relation between the norm of the shape operator $|A|^2$,
the sectional curvature $\KbarS$ of the tangent plane to \S\ in \M, and the Gaussian
curvature $K$ of the surface as $|A|^2=2(2H^2+\KbarS-K)$ and, by the Gauss-Bonnet Theorem, the
above formula becomes
\begin{equation}
\label{lambda1}
\lambda_1=-4H^2-\frac{1}{\area}\left(\alpha+8\pi(g-1)+\integ\left(2\KbarS+\RicN\right)\right).
\end{equation}

Let us pay attention to some very special surfaces immersed into a
Riemannian Killing submersion \M. If $N$ stands for the Gauss map of the
surface, they appear when the square of the {\em angle function}
$\nu=\g{N}{\xi}$ attains its maximum or minimum at any point.

So, if $\nu^2\equiv1$ we have an {\em horizontal surface} which
means that the tangent plane contains only horizontal vectors at any
point. For example, if $\M$ is a product $\B^2\times\R{}$ or
$\B^2\times\s{1}$, where $\B^2$ is any Riemannian surface of
Gaussian curvature $\kappa$ (case when $\tau\equiv 0$), the
horizontal surfaces are the slices $\B\times\{t\}$. It is easy to see that $\tau=0$ over a horizontal surface and so they are totally geodesic.  Therefore, for these surfaces $\lambda_1=0$ because of $|A|^2=0$ and $\RicN=0$ which yields $J=\Delta$. In particular, horizontal surfaces are strongly stable.

On the other hand, when $\nu\equiv0$ the surface is the total lift $\pi^{-1}(\gamma)$ of some regular curve $\gamma$ in $\B$ and it is called the {\em Hopf cylinder} over $\gamma$ or, in particular, {\em Hopf torus} if the fibers and the curve are closed. These surfaces are flat and their mean curvature $H$ is $k_g/2$, where $k_g$ is the geodesic curvature of $\gamma$ (see \cite{EO}), so that they have constant mean curvature when the curve has constant curvature, and in particular, they are minimal when lifting geodesic curves.

For a CMC Hopf torus $\pi^{-1}(\gamma)$, with $\kappa$ constant over $\gamma$, we have $|A|^2=4H^2+2\tau^2$ and $\RicN=\kappa-2\tau^2$, hence its Jacobi operator is $J=\Delta+4H^2+\kappa$, and we immediately obtain $\lambda_1=-4H^2-\kappa$. Let us observe that if $\kappa$ is a positive constant over $\gamma$, then the Hopf torus $\pi^{-1}(\gamma)$ is not strongly stable, and when $\kappa$ is non-positive the strong stability of  the torus depends on its mean curvature.

As a direct application of the above computations, we can get upper bounds for $\lambda_1$ for
compact CMC surfaces immersed into a Riemannian Killing submersion and characterize the surfaces which attain the upper bound: these are just the special ones introduced before. Let us observe that the sign of $\kappa-4\tau^2$ plays an important role for homogeneous Riemannian 3-manifolds.  So, when $\kappa-4\tau^2=0$ the manifold corresponds to a quotient of the space forms $\s{3}$ or $\R{3}$, whose isometry group has dimension 6, and otherwise the manifold has isometry group of lower dimension. In this last case, we have the Berger spheres which experiment a very different behaviour according to the mentioned sign. Therefore it seems reasonable to distinguish between both cases in the general study of Riemannian Killing submersions.
\begin{theorem}
\label{Th+}
Let \M\ be a Riemannian Killing submersion with $\kappa-4\tau^2>0$ and $\S^2$
be a compact orientable surface of constant mean curvature $H$
immersed into \M. If $\lambda_1$ stands for the first eigenvalue
of its Jacobi operator, then
\begin{enumerate}
\item[(i)]
$\displaystyle{\lambda_1\leq-2H^2-\frac{1}{\area}\integ(2\tau^2-|\nabla\tau|)}$,
with equality if and only if  \S\ is a horizontal surface; and
\item[(ii)]
$\displaystyle{\lambda_1\leq-4H^2-\frac{8\pi(g-1)}{\area}-\frac{1}{\area}\integ(\kappa-|\nabla\tau|)}$,
with equality if and only if  \S\ is a Hopf torus over a constant
curvature closed curve, and both $\kappa$ and $\tau$ are constant over
\S.
\end{enumerate}
\end{theorem}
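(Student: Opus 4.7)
My proof plan is to combine the two integral identities for $\lambda_1$ obtained just before the theorem, namely
\[
\lambda_1\,\area=-\alpha-\integ(|A|^2+\RicN)
\]
and \rf{lambda1}, with pointwise lower bounds on their integrands. Three elementary ingredients do the work: (a) $|A|^2\ge 2H^2$ by Cauchy--Schwarz on the principal curvatures, with equality exactly at umbilic points; (b) $\alpha\ge 0$; and (c) the pointwise estimate $|2\nu\sqrt{1-\nu^2}\,X(\tau)|\le|\nabla\tau|$, a consequence of $2|\nu|\sqrt{1-\nu^2}\le\nu^2+(1-\nu^2)=1$ together with $|X(\tau)|\le|\nabla\tau|$. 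The sign hypothesis $\kappa-4\tau^2>0$ supplies nonnegativity for the remaining $\nu$-dependent terms.

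For (i), I rewrite \rf{Ric} as $\RicN=2\tau^2+(1-\nu^2)(\kappa-4\tau^2)+2\nu\sqrt{1-\nu^2}X(\tau)$; the middle term is nonnegative by the sign hypothesis and the third is at least $-|\nabla\tau|$ by (c), so $\RicN\ge 2\tau^2-|\nabla\tau|$. Inserting this together with (a) and (b) into the first identity gives the inequality. Pointwise equality forces umbilicity, $\nu^2\equiv 1$, $\alpha=0$, and saturation of (c). The condition $\nu^2\equiv 1$ makes $\Sigma$ horizontal, which (by the preceding discussion in the paper) implies $\tau|_\Sigma\equiv 0$ and hence $\nabla\tau\equiv 0$ along $\Sigma$ as well (its tangential part vanishes with $\tau|_\Sigma$, and its vertical part $\xi(\tau)$ vanishes because $\xi$ is Killing). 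Conversely, any horizontal surface has $\lambda_1=H=0$ and both sides of (i) vanish.

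For (ii), combining \rf{cursec} and \rf{Ric} yields the clean identity
\[
2\KbarS+\RicN=\kappa+\nu^2(\kappa-4\tau^2)-2\nu\sqrt{1-\nu^2}\,X(\tau),
\]
whence $2\KbarS+\RicN\ge\kappa-|\nabla\tau|$ by the sign hypothesis and (c). Substituting this and (b) into \rf{lambda1} proves the inequality. The delicate part is the equality analysis: pointwise equality forces $\nu\equiv 0$ (since $\kappa-4\tau^2>0$), $|\nabla\tau|\equiv 0$ on $\Sigma$, and $\alpha=0$. From $\nu\equiv 0$ the surface is a Hopf cylinder $\pi^{-1}(\gamma)$; compactness of $\Sigma$ forces $\gamma$ and the fibers to close, making $\Sigma$ a Hopf torus ($g=1$), and the CMC hypothesis, via $H=k_g/2$, forces $\gamma$ to have constant geodesic curvature. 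Vanishing of $\alpha$ means the first eigenfunction is constant, so $|A|^2+\RicN\equiv-\lambda_1$ on $\Sigma$; using the Hopf-torus values $|A|^2=4H^2+2\tau^2$ and $\RicN=\kappa-2\tau^2$, this yields $\kappa|_\Sigma$ constant, while $|\nabla\tau|\equiv 0$ gives $\tau|_\Sigma$ constant. The converse is the direct computation $\lambda_1=-4H^2-\kappa$ already recorded in the preamble, which meets the right-hand side of (ii) exactly.
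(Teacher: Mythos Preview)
Your proposal is correct and follows essentially the same route as the paper. The only organizational differences are: for (i) you invoke the identity $\lambda_1\,\area=-\alpha-\integ(|A|^2+\RicN)$ together with $\alpha\ge 0$, whereas the paper plugs $f\equiv 1$ into the min-max characterization (these are equivalent, since $\alpha=0$ exactly when the constant function is a first eigenfunction); and in the equality analysis of (ii) you read off $|\nabla\tau|\equiv 0$ directly from pointwise saturation of the estimate, while the paper recovers it at the end from the integral identity $\kappa=\frac{1}{\area}\integ(\kappa-|\nabla\tau|)$. Your added justification in (i) that $\nabla\tau$ vanishes on a horizontal surface is a welcome detail the paper leaves implicit; note, though, that since $\nabla$ denotes the intrinsic gradient on $\Sigma$, the tangential argument ($\tau|_\Sigma\equiv 0$) already suffices and the remark on $\xi(\tau)$ is not needed.
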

\begin{proof}
(i) Using the constant function $f=1$ as a test function in \rf{minmax} to estimate $\lambda_1$, one
easily gets that
\begin{eqnarray*}
\label{lambda1.1}
\nonumber \lambda_1&\leq&-2H^2-\frac{1}{\area}\integ\RicN-\frac{1}{\area}\integ|\phi|^2\\
&\leq& -2H^2-\frac{1}{\area}\integ\RicN,
\end{eqnarray*}
where $\phi$ is the total umbilicity tensor of \S, defined as $\phi=A-HI$. Observe that $|\phi|^2=|A|^2-2H^2\geq0$, with equality if and only if \S\ is totally umbilical.

From \rf{Ric}, taking into account that $|2\nu\sqrt{1-\nu^2}|\leq
1$, $\nu^2\leq 1$ and
$|X(\tau)|=|\g{X}{\nabla\tau}|\leq|\nabla\tau|$, we have
$\RicN\geq2\tau^2-|\nabla\tau|$ and obtain the inequality. If the
equality holds, then $\RicN=2\tau^2-|\nabla\tau|$ and again by
\rf{Ric}, we get  $\nu^2$=1 and so \S\ is a horizontal surface.
Conversely, if \S\ is a horizontal surface, then we know that it is
totally geodesic, $\tau=0$ over the surface and $\lambda_1=0$.
Therefore the equality holds.

(ii) Let us suppose that the tangent plane to \S\ is generated by a local orthonormal basis $\{X,E\}$, where $X$ is horizontal. So, using \rf{Ric} and \rf{cursec}, the integrand in \rf{lambda1} is
\begin{equation}
\label{2K+Ric}
2\KbarS+\RicN =
\kappa+\nu^2(\kappa-4\tau^2)-2\nu\sqrt{1-\nu^2}X(\tau)
\end{equation}
that we can estimate as
\begin{equation}
\label{integrando}
2\KbarS+\RicN\geq\kappa-|\nabla\tau|.
\end{equation}

By this inequality and the fact that $\alpha\geq0$, \rf{lambda1}
directly yields to the announced estimate.

If the equality holds, \rf{integrando} does as well and so the angle function $\nu$ has to be identically null which means that \S\ is a Hopf torus over a constant curvature closed curve. Moreover, $\alpha=0$ which implies $\rho$ is constant and then from \rf{deltarho}
\begin{equation}
\label{eq69}
\lambda_1+|A|^2+\RicN=0.
\end{equation}

By \rf{Ric}, $\RicN=\kappa-2\tau^2$ and the Gauss equation yields $|A|^2=4H^2+2\tau^2$. Thus, taking into account that $g=1$, \rf{eq69} becomes
\[
\kappa=\frac{1}{\area}\integ(\kappa-|\nabla\tau|),
\]
which shows us that both $\kappa$ and $\tau$ are constant over \S.
Reciprocally, if \S\ is a Hopf torus over a constant curvature
closed curve and both $\kappa$ and $\tau$ are constant over $\S$, then $\nabla\tau=0$,
$g=1$ and $\lambda_1=-4H^2-\kappa$, as we have seen before. So we conclude that the equality is
satisfied.
\end{proof}

Observe that for the existence of such tori in Theorem \ref{Th+}, the submersion must have compact fibers, which occurs in several significant cases (i.e. Berger spheres and other examples that we will see in the last section).

Let us mention some interesting consequences of this theorem related to the strong stability of a surface immersed into a Riemannian Killing submersion.

\begin{corollary}
Let \M\ be a Riemannian Killing submersion with $\kappa-4\tau^2>0$. If  $\S^2$
is a strongly stable compact orientable surface of constant mean
curvature $H$ immersed into \M\, then
\begin{enumerate}
\item[(i)]
$\displaystyle{H^2\leq\frac{1}{\area}\integ\left(\frac{|\nabla\tau|}{2}-\tau^2\right)}$,
with equality if and only if  \S\ is a horizontal surface; and
\item[(ii)]
$\displaystyle{H^2<\frac{2\pi(1-g)}{\area}+\frac{1}{4\area}\integ(|\nabla\tau|-\kappa)}$.
\end{enumerate}
\end{corollary}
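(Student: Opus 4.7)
The plan is to combine the strong stability hypothesis $\lambda_1 \geq 0$ with the two upper bounds from Theorem \ref{Th+}. Both claims then follow by simple rearrangement, but the equality analyses require a touch more care.

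For (i), substituting $0 \leq \lambda_1$ into the inequality $\lambda_1 \leq -2H^2 - \frac{1}{\area}\integ(2\tau^2 - |\nabla\tau|)$ and dividing by two yields the stated bound. For the equality case, if equality in (i) of the corollary is realized, the same chain of inequalities forces $\lambda_1 = 0$ together with equality in Theorem \ref{Th+}(i), so $\S$ must be horizontal. Conversely, on a horizontal surface the total geodesy gives $H = 0$ and $\tau \equiv 0$ on $\S$; moreover $\nabla\tau$ vanishes along $\S$, since its tangential part is zero (because $\tau \equiv 0$ on $\S$) and its normal component $\xi(\tau)$ vanishes (because $\tau$ is invariant along the fibers of the Killing submersion). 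Hence both sides of the inequality are zero.

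For (ii), the same substitution applied to Theorem \ref{Th+}(ii) yields the non-strict version $H^2 \leq \frac{2\pi(1-g)}{\area} + \frac{1}{4\area}\integ(|\nabla\tau| - \kappa)$. The key extra step, and the main point of interest in the proof, is to rule out equality. If equality did hold, Theorem \ref{Th+}(ii) would force $\S$ to be a Hopf torus over a constant-curvature closed curve with $\kappa$ and $\tau$ constant over $\S$. But, as recorded in the paragraph preceding Theorem \ref{Th+}, such a torus satisfies $\lambda_1 = -4H^2 - \kappa$, and the hypothesis $\kappa - 4\tau^2 > 0$ forces $\kappa > 4\tau^2 \geq 0$, hence $\lambda_1 < 0$, contradicting strong stability. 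Therefore the inequality must be strict.
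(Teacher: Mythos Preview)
Your proof is correct and follows essentially the same route as the paper: combine $\lambda_1\geq 0$ with the two bounds of Theorem~\ref{Th+}, and for (ii) rule out equality by noting that a Hopf torus with $\kappa>0$ constant has $\lambda_1=-4H^2-\kappa<0$. Your extra justification that $\nabla\tau=0$ on a horizontal surface is correct (indeed $\xi(\tau)=0$ since the flow of the unit Killing field acts by isometries preserving the submersion), though for the corollary itself you could simply invoke the equality case of Theorem~\ref{Th+}(i) together with $\lambda_1=0$ for horizontal surfaces.
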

\begin{proof}
As \S\ is strongly stable we know that
$\lambda_1\geq0$, so inequalities are direct consequences of Theorem
\ref{Th+}. In the second case, equality is not possible because if it holds,
then $\lambda_1=0$, and again by (ii) of Theorem  \ref{Th+} we obtain that
$\S$ is a Hopf torus and $\kappa$ is a positive constant over $\S$.
Hence, we get a contradiction because $\lambda_1<0$ for such tori, as we previously saw.
\end{proof}

So if $|\nabla\tau|\leq2\tau^2$, the only strongly stable CMC compact orientable surface immersed into \M\ are the horizontal ones with $|\nabla\tau|=2\tau^2$ over \S\ and when $|\nabla\tau|\leq\kappa$, there does not exist a strongly stable CMC compact orientable surface immersed into \M\ with $g\geq 1$.

It is worth pointing out the special case when the bundle curvature is constant, so we can set the following.

\begin{corollary}
Let \M\ be a Riemannian Killing submersion with constant bundle curvature $\tau$ such that $\kappa-4\tau^2>0$ and $\S^2$ be a compact orientable surface of constant mean curvature $H$ immersed into \M. If $\lambda_1$ stands for the first eigenvalue of its Jacobi operator, then
\begin{enumerate}
\item[(i)]
$\displaystyle{\lambda_1\leq-2(H^2+\tau^2)}$, with equality if and
only if  \S\ is a horizontal surface; and
\item[(ii)]
$\displaystyle{\lambda_1\leq-4H^2-\frac{8\pi(g-1)}{\area}-\frac{1}{\area}\integ\kappa}$,
with equality if and only if  \S\ is a Hopf torus over a constant curvature closed curve $\gamma$ and $\kappa$ is constant over $\gamma$.
\end{enumerate}
\end{corollary}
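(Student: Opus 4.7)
The plan is to deduce both inequalities as immediate specializations of Theorem \ref{Th+}. The hypothesis that $\tau$ is constant on \M\ gives $\nabla\tau\equiv 0$, hence $|\nabla\tau|\equiv 0$ on any immersed surface \S. Substituting this into the two bounds of Theorem \ref{Th+} yields at once part (i) here (the integrand collapses to $2\tau^2$, whose integral equals $2\tau^2\area$) and part (ii) (the term $\integ|\nabla\tau|$ disappears, leaving $\integ\kappa$).

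For the equality cases, I would transfer them directly from Theorem \ref{Th+}. Part (i) is unchanged: the characterization ``\S\ is horizontal'' does not involve $\tau$, so it transcribes verbatim. For part (ii), Theorem \ref{Th+} demands that \S\ be a Hopf torus $\pi^{-1}(\gamma)$ over a constant-curvature closed curve and that both $\kappa$ and $\tau$ be constant over \S. Since $\tau$ is already constant on \M\ by hypothesis, the only remaining requirement is that $\kappa$ be constant over \S. Because $\kappa$ is a function on the base \B\ and $\S=\pi^{-1}(\gamma)$, this is equivalent to $\kappa$ being constant along $\gamma$, which matches the statement.

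I do not expect any genuine obstacle: the argument is a direct substitution plus a faithful transcription of the equality case of the parent theorem, with the only observation being that constancy of $\tau$ on \M\ automatically fulfills one of the conditions in the equality case of Theorem \ref{Th+}(ii).
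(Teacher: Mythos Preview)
Your proposal is correct and matches the paper's approach: the corollary is stated without proof in the paper precisely because it is an immediate specialization of Theorem~\ref{Th+} under the hypothesis $\nabla\tau\equiv 0$. Your handling of the equality cases, including the observation that constancy of $\tau$ on \M\ renders one of the conditions in Theorem~\ref{Th+}(ii) automatic and that constancy of $\kappa$ on $\S=\pi^{-1}(\gamma)$ amounts to constancy along $\gamma$, is exactly the intended reading.
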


\begin{corollary}
Let \M\ be a Riemannian Killing submersion with constant bundle curvature $\tau$ such that $\kappa-4\tau^2>0$. The only strongly stable CMC compact orientable surface immersed into \M\ are the horizontal ones.
\end{corollary}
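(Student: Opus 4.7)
The plan is to obtain this statement as an immediate consequence of part (i) of the preceding corollary. Since $\tau$ is assumed constant, that result provides the sharp bound
\[
\lambda_1 \leq -2(H^2+\tau^2),
\]
with equality if and only if the surface is horizontal, for every compact orientable CMC surface immersed in \M.

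First I would assume $\S^2$ is strongly stable and combine $\lambda_1 \geq 0$ with the bound above to obtain $-2(H^2+\tau^2) \geq \lambda_1 \geq 0$, which forces $H^2+\tau^2 \leq 0$. Since both quantities are real and non-negative, this immediately yields $H=0$ and $\tau \equiv 0$ on \M\ (the latter because $\tau$ is a constant function, so the moment it vanishes somewhere it vanishes everywhere). Every inequality in the chain then collapses to an equality, so $\lambda_1 = -2(H^2+\tau^2) = 0$, and the rigidity clause of the previous corollary forces \S\ to be a horizontal surface.

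For the converse direction, I would simply invoke the observation from Section 3 that any horizontal surface in a Riemannian Killing submersion is totally geodesic with $\tau$ vanishing along it, whence $|A|^2 = 0$, $\RicN = 0$, $J = \Delta$, and therefore $\lambda_1 = 0$; horizontal surfaces are thus strongly stable. The main obstacle is essentially nonexistent: the argument is an arithmetic chase through the sharp bound already established, combined with the sign constraint imposed by strong stability. The only delicate point worth recording is that the forced conclusion $\tau = 0$ is compatible with the standing hypothesis $\kappa - 4\tau^2 > 0$, which simply reduces to $\kappa > 0$, so the characterization is not vacuous; it is realised by the slices in product examples such as $\B^2 \times \R{}$ or $\B^2 \times \s{1}$ with $\B$ compact.
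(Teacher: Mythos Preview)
Your proposal is correct and is exactly the argument the paper intends: the corollary is stated without proof precisely because it is an immediate consequence of part (i) of the preceding corollary, combining the bound $\lambda_1\leq-2(H^2+\tau^2)$ with $\lambda_1\geq0$ to force $H=\tau=0$ and hence equality, which characterizes horizontal surfaces. The converse via $\lambda_1=0$ for horizontal surfaces is also the paper's own observation from Section~3.
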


Now, we are going to analyze the other case $\kappa-4\tau^2<0$.

\begin{theorem}
\label{Th-}
Let \M\ be a Riemannian Killing submersion with $\kappa-4\tau^2<0$ and $\S^2$ be a compact orientable surface of constant mean curvature $H$ immersed into \M. If $\lambda_1$ stands for the first eigenvalue of its Jacobi operator, then
\begin{enumerate}
\item[(i)]
$\displaystyle{\lambda_1\leq-2H^2-\frac{1}{\area}\integ(\kappa-2\tau^2-|\nabla\tau|)}$,
with equality if and only if  \S\ is a Hopf torus over a closed geodesic $\gamma$,
$\tau=0$ over \S\ and $\kappa$ is constant over $\gamma$; and

\item[(ii)]
$\displaystyle{\lambda_1\leq-4H^2-\frac{8\pi(g-1)}{\area}-\frac{1}{\area}\integ(2\kappa-4\tau^2-|\nabla\tau|)}$,
with equality if and only if  \S\ is a horizontal surface with
Gaussian curvature $K=\kappa$.
\end{enumerate}
\end{theorem}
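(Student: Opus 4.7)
The plan is to mirror the proof of Theorem~\ref{Th+}, flipping the sign exploitation in the curvature estimates drawn from \rf{Ric} and \rf{2K+Ric}. Because $\kappa - 4\tau^2$ is now negative, the terms $\nu^2(\kappa-4\tau^2)$ and $(1-\nu^2)(\kappa-4\tau^2)$ contribute with the opposite sign to the previous case, so the roles of the extremizers in parts (i) and (ii) should swap: the bound in (i) is to be attained by Hopf tori, and the bound in (ii) by horizontal surfaces.

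For part (i), I would test \rf{minmax} with the constant $f\equiv 1$ and split $|A|^2 = 2H^2 + |\phi|^2$, where $\phi = A - HI$, to obtain
\[
\lambda_1 \leq -2H^2 - \frac{1}{\area}\integ\RicN,
\]
with equality forcing $\phi = 0$. The new lower bound on $\RicN$ comes from \rf{Ric}: since $\kappa - 4\tau^2 < 0$, the term $-\nu^2(\kappa - 4\tau^2)$ is nonnegative, so $\RicN \geq \kappa - 2\tau^2 - |\nabla\tau|$. Combining yields the announced inequality. For equality one needs $\phi = 0$, $\nu^2(\kappa - 4\tau^2) = 0$ and $2\nu\sqrt{1-\nu^2}\,X(\tau) = -|\nabla\tau|$; the strict sign $\kappa - 4\tau^2 < 0$ forces $\nu \equiv 0$, so \S\ is a Hopf torus $\pi^{-1}(\gamma)$, and the third condition then reduces to $|\nabla\tau| = 0$ on \S. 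Total umbilicity on a Hopf torus, where $|A|^2 = 4H^2 + 2\tau^2$, yields $H = 0$ and $\tau = 0$ on \S, so $\gamma$ is a closed geodesic. Finally, since $f\equiv 1$ attains the minimum in \rf{minmax}, simplicity of $\lambda_1$ forces $1$ itself to be the first eigenfunction, whence $J(1) = |A|^2 + \RicN$ is constant on \S; this in turn makes $\kappa$ constant along $\gamma$. The converse follows by substituting back and using the closed form $\lambda_1 = -\kappa$ recalled in the paragraph preceding this theorem.

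For part (ii), I would work with \rf{lambda1} and estimate $2\KbarS + \RicN$ from below by rewriting \rf{2K+Ric} as
\[
2\KbarS + \RicN = 2\kappa - 4\tau^2 - (1-\nu^2)(\kappa - 4\tau^2) - 2\nu\sqrt{1-\nu^2}\,X(\tau).
\]
With $\kappa - 4\tau^2 < 0$ and $1-\nu^2 \geq 0$, the middle summand is nonnegative and the last is bounded below by $-|\nabla\tau|$; together with $\alpha \geq 0$, this delivers the desired estimate. Equality requires $\alpha = 0$, $(1-\nu^2)(\kappa - 4\tau^2) = 0$ and the $|\nabla\tau|$-bound to saturate; the strict sign of $\kappa - 4\tau^2$ then forces $\nu^2 \equiv 1$, so \S\ is horizontal. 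Horizontal surfaces are totally geodesic with $\tau = 0$ on \S, so the Gauss equation collapses to $K = \KbarS = \kappa$. The converse is verified by plugging $H = 0$, $\tau = 0$ on \S\ and $K = \kappa$ into the bound and applying Gauss-Bonnet to check that $\lambda_1 = 0$ is attained. The main obstacle throughout is the rigidity analysis, especially in (i), where the constancy of $\kappa$ along $\gamma$ is extracted not from the curvature estimate itself but from the fact that extremality of the constant test function, combined with simplicity of $\lambda_1$, forces it to be the first eigenfunction.
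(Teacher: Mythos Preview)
Your proposal is correct and follows essentially the same route as the paper's proof: the test function $f\equiv 1$ plus the estimate $\RicN\geq\kappa-2\tau^2-|\nabla\tau|$ for (i), and formula \rf{lambda1} plus the estimate $2\KbarS+\RicN\geq 2\kappa-4\tau^2-|\nabla\tau|$ for (ii), with the equality analyses leading to $\nu\equiv 0$ and $\nu^2\equiv 1$ respectively. The only cosmetic difference is in the rigidity step of (i): you argue directly that equality in \rf{minmax} at $f\equiv 1$ makes $1$ a first eigenfunction (so $|A|^2+\RicN=-\lambda_1$ is constant), whereas the paper obtains the same conclusion by plugging into \rf{lambda1} to extract $\alpha=0$ and hence $\rho$ constant---these are equivalent.
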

\begin{proof}
(i) Using the same reasoning as in the first part of
Theorem \ref{Th+}, we get the inequality because from \rf{Ric} we know
$\RicN\geq\kappa-2\tau^2-|\nabla\tau|$. Moreover, if the equality
holds, then $\phi=0$ and $\RicN=\kappa-2\tau^2-|\nabla\tau|$, so $\S$
is totally umbilical and $\nu\equiv0$, which means that $\S$ is a
totally umbilical Hopf torus over a constant curvature closed curve. By the total umbilicity we know $|A|^2=2H^2=k_g^2/2$, and the Gauss equation reduces to
$k_g^2/2=k_g^2+2\tau^2$. So we conclude $k_g=0$ and $\tau=0$
over $\S$. Now from \rf{lambda1} we get $\alpha=0$ which implies, as we saw in the proof of Theorem \ref{Th+}, that
$\kappa$ is constant over $\S$. Conversely, if $\S$ is a
Hopf torus over a closed geodesic $\gamma$ and
$\kappa$ is constant over $\gamma$, we know that $\lambda_1=-\kappa$. Since $\tau=0$ over \S,
the equality holds.

(ii) The proof of the inequality is analogous with (ii) in Theorem \ref{Th+}, but in this case from \rf{2K+Ric} we have
\[
2\KbarS+\RicN\geq2\kappa-4\tau^2-|\nabla\tau|.
\]

Now, if we suppose that equality holds, then
$2\KbarS+\RicN=2\kappa-4\tau^2-|\nabla\tau|$ which implies
$\nu^2\equiv1$, that is, $\S$ is a horizontal surface. As we have
already observed, $\S$ is totally geodesic and $\tau=0$ over $\S$,
so using \rf{cursec} we follow that $\KbarS=\kappa$. Moreover,
the Gauss equation simplifies to $\KbarS=K$, therefore $K=\kappa$.
Reciprocally, if \S\ is a horizontal surface, then $\lambda_1=0$. Since the Gaussian curvature
of \S\ is $K=\kappa$, \S\ is totally geodesic and
$\tau=0$ over \S, the right hand of the inequality is
\[
-\frac{8\pi(g-1)}{\area}-\frac{1}{\area}\integ 2K,
\]
which is zero by the Gauss-Bonnet Theorem, so the equality holds.
\end{proof}

\begin{corollary}
Let \M\ be a Riemannian Killing submersion with $\kappa-4\tau^2<0$. If  $\S^2$
is a strongly stable compact orientable surface of constant mean
curvature $H$ immersed into \M\, then
\begin{enumerate}
\item[(i)]
$\displaystyle{H^2<\frac{1}{\area}\integ\left(\tau^2+\frac{|\nabla\tau|}{2}-\frac{\kappa}{2}\right)}$; and
\item[(ii)]
$\displaystyle{H^2\leq\frac{2\pi(1-g)}{\area}+\frac{1}{\area}\integ\left(\tau^2+\frac{|\nabla\tau|}{4}-\frac{\kappa}{2}\right)}$,
with equality if and only if  \S\ is a horizontal surface with
Gaussian curvature $K=\kappa$.
\end{enumerate}
\end{corollary}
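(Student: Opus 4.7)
My plan is to combine the hypothesis $\lambda_1\geq 0$ (strong stability) with the upper bounds on $\lambda_1$ already proved in Theorem~\ref{Th-}. Both inequalities in the corollary follow by a direct rearrangement: inserting $\lambda_1\geq 0$ into the bound of Theorem~\ref{Th-}(i) and solving for $H^2$ produces the estimate in (i), and analogously Theorem~\ref{Th-}(ii) yields (ii). So the only substantive work is in the analysis of the equality cases.

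For (i), I would argue that the inequality must be strict via a contradiction, in the same spirit as the preceding corollary for $\kappa-4\tau^2>0$. If equality held, then $\lambda_1=0$ and the equality clause of Theorem~\ref{Th-}(i) would force \S\ to be a Hopf torus over a closed geodesic $\gamma$, with $\tau\equiv 0$ on \S\ and $\kappa$ constant on $\gamma$. For such a torus, the preliminary discussion recorded $\lambda_1=-4H^2-\kappa$, and since $\gamma$ is a geodesic we get $H=0$, hence $\lambda_1=-\kappa$. But the standing condition $\kappa-4\tau^2<0$ along \S, together with $\tau\equiv 0$ there, forces $\kappa<0$ on \S, so $\lambda_1>0$, contradicting $\lambda_1=0$.

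For (ii) the forward direction of the equality case is immediate: equality forces $\lambda_1=0$, and then the equality clause of Theorem~\ref{Th-}(ii) identifies \S\ as a horizontal surface with $K=\kappa$. For the converse I would invoke the properties of horizontal surfaces recalled earlier---total geodesy, $\tau\equiv 0$ on \S\ (hence $|\nabla\tau|$ vanishes there) and $H=0$---to collapse the right hand side of (ii) to $\frac{2\pi(1-g)}{\area}-\frac{1}{2\area}\integ K$, which vanishes by the Gauss-Bonnet theorem and matches $H^2=0$. The only delicate point is the strict inequality in (i): it rests on the ``no-go'' step that exploits the strict sign $\kappa-4\tau^2<0$ to rule out the boundary Hopf-torus configuration; once that is in hand, everything else is straightforward algebra plus a Gauss-Bonnet verification in (ii).
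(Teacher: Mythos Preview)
Your proposal is correct and follows essentially the same approach as the paper: combine $\lambda_1\geq 0$ with the bounds of Theorem~\ref{Th-}, and for (i) rule out equality by the same contradiction (the paper phrases it as $\lambda_1=-\kappa>-4\tau^2=0$, which is exactly your observation that $\kappa<0$ once $\tau=0$). Your treatment of the equality case in (ii) is in fact more explicit than the paper's, which simply inherits it from Theorem~\ref{Th-}(ii) without further comment.
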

\begin{proof}
If $\S$ is strongly stable, then $\lambda_{1}\geq 0$ and from Theorem \ref{Th-}, we
derive both of these inequalities. Moreover, in the case (i)
equality does not satisfy because if it does, then $\lambda_1=0$
and from Theorem \ref{Th-}, $\S$ is a Hopf torus over a closed geodesic
$\gamma$, $\tau=0$ over \S\ and $\kappa$ is constant over $\gamma$,
so $\lambda_1=-\kappa>-4\tau^2=0$, which is a contradiction.
\end{proof}

\begin{corollary}
Let \M\ be a Riemannian Killing submersion with constant bundle curvature $\tau$ such that $\kappa-4\tau^2<0$ and $\S^2$ be a compact orientable surface of constant mean curvature $H$ immersed into \M. If $\lambda_1$ stands for the first eigenvalue of its Jacobi operator, then
\begin{enumerate}
\item[(i)]
$\displaystyle{\lambda_1\leq-2(H^2-\tau^2)-\frac{1}{\area}\integ\kappa}$, with equality if and only if  \S\ is a Hopf torus over a closed geodesic $\gamma$, $\tau=0$ and $\kappa$ is constant over $\gamma$; and

\item[(ii)]
$\displaystyle{\lambda_1\leq-4(H^2-\tau^2)-\frac{8\pi(g-1)}{\area}-\frac{2}{\area}\integ\kappa}$,
with equality if and only if  \S\ is a horizontal surface with
Gaussian curvature $K=\kappa$.
\end{enumerate}
\end{corollary}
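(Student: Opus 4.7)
The plan is to read off this statement directly from Theorem \ref{Th-} by specializing to the case in which the bundle curvature $\tau$ is a (global) constant on $\M$. The main point is that constancy of $\tau$ gives $|\nabla\tau|\equiv 0$, so the integral terms involving $|\nabla\tau|$ vanish and the terms $2\tau^2$ and $4\tau^2$ become constants that can be factored out of the area integral.

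For (i), I would start from the inequality
\[
\lambda_1\leq-2H^2-\frac{1}{\area}\integ(\kappa-2\tau^2-|\nabla\tau|),
\]
provided by Theorem \ref{Th-}(i), drop the $|\nabla\tau|$ term and use $\frac{1}{\area}\integ 2\tau^2=2\tau^2$ to rewrite the right-hand side as $-2(H^2-\tau^2)-\frac{1}{\area}\integ\kappa$. For the equality case, Theorem \ref{Th-}(i) tells us that equality forces $\S$ to be a Hopf torus over a closed constant-curvature curve $\gamma$ with $\tau=0$ on $\S$ and $\kappa$ constant over $\gamma$; the only new observation here is that, since $\tau$ is assumed constant on $\M$, "$\tau=0$ on $\S$" upgrades to "$\tau=0$ globally," and then a closed constant-curvature curve with $k_g=0$ is simply a closed geodesic. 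Conversely, for such a torus I would recall from the preceding discussion that $\lambda_1=-\kappa$, which coincides with the right-hand side of (i) since $H=\tau=0$.

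For (ii), the same substitution applied to
\[
\lambda_1\leq-4H^2-\frac{8\pi(g-1)}{\area}-\frac{1}{\area}\integ(2\kappa-4\tau^2-|\nabla\tau|)
\]
yields the announced inequality after absorbing $4\tau^2$ into the $-4H^2$ term. The equality case is inherited verbatim from Theorem \ref{Th-}(ii): $\S$ must be a horizontal surface with $K=\kappa$, and conversely on such a surface $\lambda_1=0$ while the right-hand side reduces, via Gauss--Bonnet applied to $\integ 2K$, to $-4(-\tau^2)-\frac{8\pi(g-1)}{\area}-\frac{2}{\area}\integ\kappa$; one checks this vanishes by using $\tau=0$ on $\S$ (so constant $\tau$ is $0$) and $K=\kappa$.

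Because this is purely a specialization of the already proved Theorem \ref{Th-}, I do not foresee any real obstacle; the only thing to be careful about is the bookkeeping in the equality case of (i), where one must note explicitly that "$\tau=0$ on $\S$" combined with the standing hypothesis that $\tau$ is constant gives $\tau\equiv 0$ on all of $\M$, which is why the statement can say simply "$\tau=0$".
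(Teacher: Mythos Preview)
Your proposal is correct and is exactly the approach the paper intends: the corollary is stated without proof, being an immediate specialization of Theorem \ref{Th-} with $|\nabla\tau|\equiv 0$ and the constant $\tau^2$ pulled out of the integrals. Your handling of the equality cases, in particular the observation that ``$\tau=0$ on $\S$'' together with constancy of $\tau$ gives $\tau\equiv 0$, is precisely the small bookkeeping needed.
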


As a consequence, if \M\ is a Riemannian Killing submersion with constant bundle curvature $\tau$ and Gaussian curvature $\kappa$ such that $0\leq\kappa<4\tau^2$, then any compact orientable surface of constant mean curvature $|H|>\tau$ immersed into \M\ can not be strongly stable, and if $|H|\leq\tau$ and the surface is strongly stable, then
\[
\area(\tau^2-H^2)\geq2\pi(g-1),
\]
with equality only if the surface is a flat horizontal one with $\kappa=0$.

\section{Some examples}

Let us see some interesting examples to which we can apply our results. As already mentioned, a Riemannian Killing submersion \M\ with both $\kappa$
and $\tau$ constant is a homogeneous Riemannian 3-manifold. For
these spaces, the consequences from the theorems are already
compiled in a previous paper (see \cite{AMO}). So, the results here
can be seen as a generalization of those.

Another remarkable case arises when the bundle curvature $\tau$ of
\M\ is null. Thus $\mathcal{M}(\kappa,0)$ is a product
$\B^2\times\R{}$ or $\B^2\times\s{1}$, where $\B^2$ is an arbitrary
Riemannian surface. When $\kappa$ is positive Theorem \ref{Th+}
establishes that for any compact orientable surface of constant mean
curvature $H$ immersed into $\mathcal{M}(\kappa,0)$ we have
$\lambda_1\leq-2H^2$ and the slices $\B\times\{t\}$ are the only
ones satisfying the equality (if \B\ is compact). Also observe
that if the surface is strongly stable, then it has to be minimal. On
the other hand, we know
\[
\lambda_1\leq-4H^2-\frac{8\pi(g-1)}{\area}-\frac{1}{\area}\integ\kappa
\]
and Hopf tori $\gamma\times\s{1}$ are the only ones that satisfy
equality, where $\gamma$ is any constant curvature closed curve such
that $\kappa$ is constant over $\gamma$. For the other case
$\kappa<0$ we have that
\[
\lambda_1\leq-2H^2-\frac{1}{\area}\integ\kappa,
\]
with equality only for minimal Hopf tori $\gamma\times\s{1}$ with
$\kappa$ constant over $\gamma$. And
\[
\lambda_1\leq-4H^2-\frac{8\pi(g-1)}{\area}-\frac{2}{\area}\integ\kappa,
\]
with equality only for slices $\B\times\{t\}$ with Gaussian
curvature $K=\kappa$.

And for the general case, where both $\kappa$ and $\tau$ are
non-constant, we can find a lot of examples in the work \cite{SV}.
Here the authors find out 3-dimensional spaces which locally admit a
doubly warped product metric and project over a certain surface as a
Riemannian Killing submersion. These spaces are very interesting because they
are characterized as the only ones carrying a unit Killing field
which admit totally geodesic surfaces different to a Hopf cylinder
or a horizontal surface, as they prove in the same work. The
following one is a particular example where we can apply the theory.

Let $M$ be the product $I\times\s{1}\times\s{1}$,  for some open
interval $I$ of $\R{}$ or $I=\s{1}$,  with the doubly warped product
metric $$ds^2=dx^2+\sin^2\theta(x)dy^2+\cos^2\theta(x)dz^2,$$ where
$\theta:I\fle(0,\pi/2)$ is any smooth function, and $B$ be the
warped product $I\times_f\s{1}$, with
$f(u)=\frac{1}{2}\sin(2\theta(u))$. It is a straightforward
computation to check that the application $\pi:M\fle B$ given by
$\pi(x,y,z)=(x,y-z)$ is a Riemannian Killing submersion with
$\xi=\partial_y+\partial_z$. The bundle curvature of $\pi$ is
$\tau=-\theta'$ and the Gaussian curvature of $B$ is
$\kappa=4(\theta')^2-2\cot(2\theta)\theta''$, and so
$\kappa-4\tau^2=-2\cot(2\theta)\theta''$. Thus, if $\theta<\pi/4$
and $\theta''<0$, Theorem \ref{Th+} asserts that for any compact
orientable surface of constant mean curvature $H$ immersed into $M$,
we have
\[
\displaystyle{\lambda_1\leq-2H^2-\frac{1}{\area}\integ(2(\theta')^2+\theta'')},
\]
and the equality holds for horizontal surfaces. And
\[
\displaystyle{\lambda_1\leq-4H^2-\frac{8\pi(g-1)}{\area}-\frac{1}{\area}\integ(4(\theta')^2+(1-2\cot(2\theta))\theta'')},
\]
and the equality holds for any Hopf torus
$\S=\pi^{-1}(\{u\}\times\s{1})$ over any parallel of  \B. An example
of such a function $\theta$ is given by $\theta(x)=1/2\tan^{-1}(x)$,
with $I=(0,+\infty)$. Observe that if we take
$\theta(x)=1/2\tan^{-1}(x)+\pi/4$, then $\kappa-4\tau^2<0$ and
Theorem \ref{Th-} applies. In this case, the equality cannot occur
in (i) because of $\tau\neq0$.

\bibliographystyle{elsarticle-num}

\begin{thebibliography}{00}

\bibitem{ABBr} L. J. Al\'{\i}as, A. Barros and A. Brasil Jr., \textit{A spectral characterization of the $H(r)$-torus by the first stability eigenvalue}, Proc. Amer. Math. Soc. \textbf{133} (2005),  no. 3, 875--884.

\bibitem{AMO} L. J. Al\'{\i}as, M. A. Mero\~no and I. Ortiz, \textit{On the first stability eigenvalue of constant mean curvature surfaces into homogeneous 3-manifolds}, Submitted.

\bibitem{Barros} M. Barros, \textit{Willmore tori in non-standard 3-spheres}, Math. Proc. Camb. Philos. Soc. {\bf 121} (1997), no. 2, 321--324.

\bibitem{BFLM} M. Barros, A. Ferr\'andez, P. Lucas and M. A. Mero\~no, \textit{Hopf cylinders, B-scrolls and solitons of the Betchov-Da Rios equation in the three-dimensional anti-de Sitter space}, C. R. Acad. Sci. Paris, \textbf{321} (1995), 505--509.

\bibitem{CW} H. Chen and X. Wang, \textit{Stability and eigenvalue estimates of linear Weingarten hypersurfaces in a sphere}, J. Math. Anal. Appl. {\bf 397} (2013), no. 2, 658--670.

\bibitem{EO} J. M. Espinar, I. S. de Oliveira, \textit{Locally convex surfaces immersed in a Killing
submersion}, http://arxiv.org/abs/1002.1329 (2010)

\bibitem{Amelia}  M. A. Le\'on-Guzm\'an, \textit{Uniqueness of Lorentzian Hopf tori}, Differ. Geom. Appl. {\bf 30} (2012), no. 1, 42--48.

\bibitem{Pink} U. Pinkall, \textit{Hopf tori in \s{3}}, Invent. Math. {\bf 81} (1985), 379--386.

\bibitem{O} B. O'Neill, \textit{The fundamental equations of a
submersion}, Mich. Math. J. {\bf 13} (1966), 459--469.

\bibitem{Pe} O. Perdomo, \textit{First stability eigenvalue characterization of Clifford hypersurfaces},
Proc. Amer. Math. Soc. {\bf 130} (2002), 3379--3384.

\bibitem{RST} H. Rosenberg, R. Souam and E. Toubiana, \textit{General curvature estimates for stable H-surfaces in 3-manifolds and applications},  J. Differ. Geom. {\bf 84} (2010), no. 3, 623--648.

\bibitem{Sim} J. Simons, \textit{Minimal varieties in Riemannian manifolds}, Ann. Math. (2) {\bf 88} (1968), 62--105.

\bibitem{SV} R. Souam and J. Van der Veken, \textit{Totally umbilical hypersurfaces of manifolds admitting a unit Killing
field},  Trans. Amer. Math. Soc. {\bf 364} (2012), no. 7,
3609--3626.

\bibitem{Wu} C. Wu, \textit{New characterizations of the Clifford tori and the
Veronese surface}, Arch. Math. (Basel) {\bf 61} (1993), 277--284.

\end{thebibliography}

\end{document}